\begin{document}
\bibliographystyle{plain}
\newfont{\teneufm}{eufm10}
\newfont{\seveneufm}{eufm7}
\newfont{\fiveeufm}{eufm5}
%
%
\newfam\eufmfam
              \textfont\eufmfam=\teneufm \scriptfont\eufmfam=\seveneufm
              \scriptscriptfont\eufmfam=\fiveeufm
\def\bbbr{{\rm I\!R}}
\def\bbbm{{\rm I\!M}}
\def\bbbn{{\rm I\!N}}
\def\bbbf{{\rm I\!F}}
\def\bbbh{{\rm I\!H}}
\def\bbbk{{\rm I\!K}}
\def\bbbp{{\rm I\!P}}
\def\bbbone{{\mathchoice {\rm 1\mskip-4mu l} {\rm 1\mskip-4mu l}
{\rm 1\mskip-4.5mu l} {\rm 1\mskip-5mu l}}}
\def\bbbc{{\mathchoice {\setbox0=\hbox{$\displaystyle\rm C$}\hbox{\hbox
to0pt{\kern0.4\wd0\vrule height0.9\ht0\hss}\box0}}
{\setbox0=\hbox{$\textstyle\rm C$}\hbox{\hbox
to0pt{\kern0.4\wd0\vrule height0.9\ht0\hss}\box0}}
{\setbox0=\hbox{$\scriptstyle\rm C$}\hbox{\hbox
to0pt{\kern0.4\wd0\vrule height0.9\ht0\hss}\box0}}
{\setbox0=\hbox{$\scriptscriptstyle\rm C$}\hbox{\hbox
to0pt{\kern0.4\wd0\vrule height0.9\ht0\hss}\box0}}}}
\def\bbbq{{\mathchoice {\setbox0=\hbox{$\displaystyle\rm
Q$}\hbox{\raise
0.15\ht0\hbox to0pt{\kern0.4\wd0\vrule height0.8\ht0\hss}\box0}}
{\setbox0=\hbox{$\textstyle\rm Q$}\hbox{\raise
0.15\ht0\hbox to0pt{\kern0.4\wd0\vrule height0.8\ht0\hss}\box0}}
{\setbox0=\hbox{$\scriptstyle\rm Q$}\hbox{\raise
0.15\ht0\hbox to0pt{\kern0.4\wd0\vrule height0.7\ht0\hss}\box0}}
{\setbox0=\hbox{$\scriptscriptstyle\rm Q$}\hbox{\raise
0.15\ht0\hbox to0pt{\kern0.4\wd0\vrule height0.7\ht0\hss}\box0}}}}
\def\bbbt{{\mathchoice {\setbox0=\hbox{$\displaystyle\rm
T$}\hbox{\hbox to0pt{\kern0.3\wd0\vrule height0.9\ht0\hss}\box0}}
{\setbox0=\hbox{$\textstyle\rm T$}\hbox{\hbox
to0pt{\kern0.3\wd0\vrule height0.9\ht0\hss}\box0}}
{\setbox0=\hbox{$\scriptstyle\rm T$}\hbox{\hbox
to0pt{\kern0.3\wd0\vrule height0.9\ht0\hss}\box0}}
{\setbox0=\hbox{$\scriptscriptstyle\rm T$}\hbox{\hbox
to0pt{\kern0.3\wd0\vrule height0.9\ht0\hss}\box0}}}}
\def\bbbs{{\mathchoice
{\setbox0=\hbox{$\displaystyle     \rm S$}\hbox{\raise0.5\ht0\hbox
to0pt{\kern0.35\wd0\vrule height0.45\ht0\hss}\hbox
to0pt{\kern0.55\wd0\vrule height0.5\ht0\hss}\box0}}
{\setbox0=\hbox{$\textstyle        \rm S$}\hbox{\raise0.5\ht0\hbox
to0pt{\kern0.35\wd0\vrule height0.45\ht0\hss}\hbox
to0pt{\kern0.55\wd0\vrule height0.5\ht0\hss}\box0}}
{\setbox0=\hbox{$\scriptstyle      \rm S$}\hbox{\raise0.5\ht0\hbox
to0pt{\kern0.35\wd0\vrule height0.45\ht0\hss}\raise0.05\ht0\hbox
to0pt{\kern0.5\wd0\vrule height0.45\ht0\hss}\box0}}
{\setbox0=\hbox{$\scriptscriptstyle\rm S$}\hbox{\raise0.5\ht0\hbox
to0pt{\kern0.4\wd0\vrule height0.45\ht0\hss}\raise0.05\ht0\hbox
to0pt{\kern0.55\wd0\vrule height0.45\ht0\hss}\box0}}}}
\def\bbbz{{\mathchoice {\hbox{$\sf\textstyle Z\kern-0.4em Z$}}
{\hbox{$\sf\textstyle Z\kern-0.4em Z$}}
{\hbox{$\sf\scriptstyle Z\kern-0.3em Z$}}
{\hbox{$\sf\scriptscriptstyle Z\kern-0.2em Z$}}}}
\def\ts{\thinspace}

\newtheorem{theorem}{Theorem}
\newtheorem{corollary}[theorem]{Corollary}
\newtheorem{lemma}[theorem]{Lemma}
\newtheorem{claim}[theorem]{Claim}
\newtheorem{cor}[theorem]{Corollary}
\newtheorem{prop}[theorem]{Proposition}
\newtheorem{definition}[theorem]{Definition}
\newtheorem{remark}[theorem]{Remark}
\newtheorem{question}[theorem]{Open Question}

\numberwithin{equation}{section}
\numberwithin{theorem}{section}

\def\qed{\ifmmode
\squareforqed\else{\unskip\nobreak\hfil
\penalty50\hskip1em\null\nobreak\hfil\squareforqed
\parfillskip=0pt\finalhyphendemerits=0\endgraf}\fi}

\def\squareforqed{\hbox{\rlap{$\sqcap$}$\sqcup$}}

\def \A {{\mathbb A}}
\def \C {{\mathbb C}}
\def \F {{\mathbb F}}
\def \L {{\mathbb L}}
\def \K {{\mathbb K}}
\def \Q {{\mathbb Q}}
\def \Z {{\mathbb Z}}
\def\cA{{\mathcal A}}
\def\cB{{\mathcal B}}
\def\cC{{\mathcal C}}
\def\cD{{\mathcal D}}
\def\cE{{\mathcal E}}
\def\cF{{\mathcal F}}
\def\cG{{\mathcal G}}
\def\cH{{\mathcal H}}
\def\cI{{\mathcal I}}
\def\cJ{{\mathcal J}}
\def\cK{{\mathcal K}}
\def\cL{{\mathcal L}}
\def\cM{{\mathcal M}}
\def\cN{{\mathcal N}}
\def\cO{{\mathcal O}}
\def\cP{{\mathcal P}}
\def\cQ{{\mathcal Q}}
\def\cR{{\mathcal R}}
\def\cS{{\mathcal S}}
\def\cT{{\mathcal T}}
\def\cU{{\mathcal U}}
\def\cV{{\mathcal V}}
\def\cW{{\mathcal W}}
\def\cX{{\mathcal X}}
\def\cY{{\mathcal Y}}
\def\cZ{{\mathcal Z}}
\newcommand{\rmod}[1]{\: \mbox{mod}\: #1}

\def\tcN{\cN^\mathbf{c}}
\def\F{\mathbb F}
\def\Tr{\operatorname{Tr}}
\def\mand{\qquad \mbox{and} \qquad}
\renewcommand{\vec}[1]{\mathbf{#1}}
\def\eqref#1{(\ref{#1})}
\newcommand{\ignore}[1]{}
\hyphenation{re-pub-lished}
\parskip 1.5 mm
\def\lln{{\mathrm Lnln}}
\def\Res{\mathrm{Res}\,}
\def\lcm{\mathrm{lcm}\,}
\def\rad{\mathrm{rad}\,}
\def\F{{\bbbf}}
\def\Fp{\F_p}
\def\fp{\Fp^*}
\def\Fq{\F_q}
\def\ff{\F_2}
\def\ffn{\F_{2^n}}
\def\K{{\bbbk}}
\def \Z{{\bbbz}}
\def \N{{\bbbn}}
\def\Q{{\bbbq}}
\def \R{{\bbbr}}
\def \P{{\bbbp}}
\def\Zm{\Z_m}
\def \Um{{\mathcal U}_m}
\def \Bf{\frak B}
\def\Km{\cK_\mu}
\def\va {{\mathbf a}}
\def \vb {{\mathbf b}}
\def \vc {{\mathbf c}}
\def\vx{{\mathbf x}}
\def \vr {{\mathbf r}}
\def \vv {{\mathbf v}}
\def\vu{{\mathbf u}}
\def \vw{{\mathbf w}}
\def \vz {{\mathbfz}}
\def\\{\cr}
\def\({\left(}
\def\){\right)}
\def\fl#1{\left\lfloor#1\right\rfloor}
\def\rf#1{\left\lceil#1\right\rceil}
\def\AST{\mathcal{A}_{\mathrm{ST}}}
\def\AU{\mathcal{A}_{\mathrm{U}}}

\def \ctE {\widetilde \cE}

\newcommand{\floor}[1]{\lfloor {#1} \rfloor}
\newcommand{\commF}[1]{\marginpar{%
\begin{color}{green}
\vskip-\baselineskip 
\raggedright\footnotesize
\itshape\hrule \smallskip F: #1\par\smallskip\hrule\end{color}}}

\newcommand{\commI}[1]{\marginpar{%
\begin{color}{blue}
\vskip-\baselineskip 
\raggedright\footnotesize
\itshape\hrule \smallskip I: #1\par\smallskip\hrule\end{color}}}

\newcommand{\commM}[1]{\marginpar{%
\begin{color}{magenta}
\vskip-\baselineskip 
\raggedright\footnotesize
\itshape\hrule \smallskip M: #1\par\smallskip\hrule\end{color}}}

\def\rem{{\mathrm{\,rem\,}}}
\def\dist {{\mathrm{\,dist\,}}}
\def\etal{{\it et al.}}
\def\ie{{\it i.e. }}
\def\veps{{\varepsilon}}
\def\eps{{\eta}}
\def\ind#1{{\mathrm {ind}}\,#1}
               \def \MSB{{\mathrm{MSB}}}
\newcommand{\abs}[1]{\left| #1 \right|}

\title[Functions arising from Euler and Carmichael quotients]{On two functions arising in the study of the Euler and Carmichael quotients}

\author{Florian~Luca}
\address{School of Mathematics, University of the Witwatersrand, Private Bag X3, Wits 2050, South Africa, 
and Department of Mathematics, Faculty of Sciences, University of Ostrava, 30. dubna 22, 701 03 Ostrava 1, Czech Republic}
\email{florian.luca@wits.ac.za}

\author{Min Sha}
\address{School of Mathematics and Statistics, University of New South Wales,
 Sydney, NSW 2052, Australia}
\email{shamin2010@gmail.com}

\author{Igor E. Shparlinski}
\address{School of Mathematics and Statistics, University of New South Wales,
 Sydney, NSW 2052, Australia}
\email{igor.shparlinski@unsw.edu.au}

\date{}

\subjclass[2010]{11A25, 11K65,  11N36}
\keywords{Euler quotient, Carmichael quotient, Carmichael function, Euler function, greatest common divisor}

\begin{abstract}
We investigate two arithmetic functions naturally occurring in the study of the Euler and Carmichael quotients. 
The functions are related to the frequency of vanishing of the Euler and Carmichael quotients. We obtain 
several results concerning  the relations between these functions as well as their typical and extreme values.
\end{abstract}

\maketitle

\section{Introduction}
For  a positive integer $m$, let $\lambda(m)$ be the exponent of the multiplicative group modulo $m$, which is the so-called Carmichael function of $m$, and let $\varphi(m)$ be the Euler function of $m$. If the prime factorization of $m$ is
\begin{equation}
\label{eq:1}
m=p_1^{r_1}\ldots p_k^{r_k},
\end{equation}
then 
$$
\lambda(m)=\lcm[\lambda(p_1^{r_1}), \lambda(p_2^{r_2}),\ldots, \lambda(p_k^{r_k})],
$$
where  for a prime power $p^r$ we have $\lambda(p^r)=p^{r-\sigma}(p-1)$ with $\sigma=1$ except the case when $p=2$ and $r\ge 3$  in which $\sigma=2$. 

Given an integer $a$ relatively prime to $m$, the integer 
$$
Q_m(a) = \frac{a^{\varphi(m)}-1}{m}
$$
is called the \textit{Euler quotient} of $m$ with base $a$, 
which is a generalization of the classical \textit{Fermat quotient}; besides, the integer 
$$
C_m(a)=\frac{a^{\lambda(m)}-1}{m}
$$
is called the \textit{Carmichael quotient} of $m$ with base $a$. 
 Some arithmetic properties of the numbers $Q_m(a)$ and $C_m(a)$ appear in~\cite{ADS} and~\cite{Sha}, respectively. For example, if  the integers $a,~b$ are coprime to $m$, it is shown 
 in~\cite[Proposition 2.1]{ADS} and~\cite[ Proposition~2.2]{Sha}, respectively, that 
$$
Q_m(ab)\equiv Q_m(a)+Q_m(b)\pmod m
$$
and 
$$
C_m(ab)\equiv C_m(a)+C_m(b)\pmod m; 
$$ 
 if furthermore 
$a\equiv b\pmod{m^2}$, then $Q_m(a)\equiv Q_m(b)\pmod m$ and $C_m(a)\equiv C_m(b)\pmod m$.

In particular, we can define two group homomorphisms:
$$
\psi_m:\  (\Z/m^2\Z)^*\longrightarrow (\Z/m\Z,+),\qquad x\mapsto Q_m(x),
$$
and 
$$
\phi_m:\  (\Z/m^2\Z)^*\longrightarrow (\Z/m\Z,+),\qquad x\mapsto C_m(x).
$$

 For a positive integer $m$ with prime factorization~\eqref{eq:1}, recall that the radical of $m$ is defined to be the product of all distinct prime factors of $m$:
$$
\rad(m)=p_1\ldots p_k.
$$
By~\cite[Proposition~4.4]{ADS}, the image of the morphism $\psi_m$ is $d(m)\Z/m\Z$, where 
$$
d(m)=\gcd(m, \delta \varphi(\rad(m)) ), 
$$
where $\delta=2$ if $4\mid m$ and $\delta=1$ otherwise. 
The image of the morphism $\phi_m$ can also be  determined. The related result in~\cite[Proposition~4.3]{Sha}  concerning the image of $\phi_m$ is not correct and should be replaced by the statement that the image of $\phi_m$ is $f(m)\Z/m\Z$, where 
$$
f(m) = \gcd(m, \delta \lambda(m) \varphi(\rad(m))/\varphi(m))
$$
(see Proposition 4.3 in the arXiv version of~\cite{Sha}). So, $f(m) \mid d(m)$. 
Moreover, by~\cite[Proposition~2.1]{Sha}, we get 
$$
\frac{\varphi(m)}{\lambda(m)}f(m)\Z/m\Z = d(m)\Z/m\Z, 
$$
which implies that $\gcd(m,\frac{\varphi(m)}{\lambda(m)}f(m))=d(m)$. Clearly, we have 
$$
\frac{d(m)}{f(m)} = \gcd(m/f(m), \varphi(m)/\lambda(m)). 
$$
Furthermore, by~\cite[Corollary~4.3]{Sha}, we have 
\begin{equation}
\label{eq:Zero Set}
\#\{a \in(\Z/m^2\Z)^*~:~C_m(a) \equiv 0 \pmod m\} = f(m)\varphi(m).
\end{equation}

Some questions about $d(m)$ and $f(m)$ have been studied in~\cite{Sha}. For example, in the proof
of~\cite[Proposition~4.5]{Sha} it has been  shown that $\lim_{m\to \infty} d(m)/m=0$. Further, it is shown
in~\cite[Section~4]{Sha}  that under the assumption of the existence
of infinitely many Sophie-Germain primes, we have
$$
\limsup _{m\to\infty} d(m)/m^{1/2} \ge 1/\sqrt{2}.
$$
Here, using a result and some arguments from~\cite{ELP},  we make these inequalities more precise
as follows:

\begin{theorem}  
\label{thm:dm}
 We have:
\begin{itemize}
\item[(i)] For any integer $m\ge 1$,
$$
d(m)\le \sqrt{2} m \exp\left(-{\sqrt{\log 2 \log m + (\log 2)^2/4}}\right).
$$
\item[(ii)] For infinitely many  integers $m\ge 1$,
$$
f(m)  \ge  m^{1-(1+o(1))\log\log\log m/\log\log m}.
$$
\end{itemize}
\end{theorem}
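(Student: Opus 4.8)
The plan for part (i) is to isolate the trivial cases and then reduce to a clean quadratic inequality. When $m$ is a power of $2$ (or $m=1$) one has $d(m)\le 2$, and the bound is checked by hand; it becomes an equality at $m=1$ and at $m=4$, which is precisely why the constants $\sqrt 2$ and $(\log 2)^2/4$ occur. So assume $m$ is not a power of $2$ and set $N=m/d(m)$. The largest prime factor $P$ of $m$ is odd and cannot divide $d(m)$ (otherwise $P\mid p-1$ for some strictly larger prime $p\mid m$), so $P\mid N$, hence $N\ge 3$ and $\log N>0$. Completing the square shows that the asserted bound is exactly equivalent to
$$
\log 2\cdot\log d(m)\ \le\ \bigl(\log N\bigr)^{2}.
$$

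To prove this I would exploit that $d(m)\mid m$ and $d(m)\mid\delta\varphi(\rad m)=\delta\prod_{p\mid m}(p-1)$. Each odd prime $q\mid d(m)$ divides $p-1$ for some prime $p\mid m$ with $p>q$, hence $p\ge 2q$; iterating, and keeping track of the multiplicities (which behave well because a prime power dividing $p-1$ is $<p$), the primes occurring in $d(m)$ organise into increasing divisibility chains $q_{0}<q_{1}<\dots<q_{t}$ with $q_{i}\mid q_{i+1}-1$ and all $q_i\mid m$, each chain topped by a prime dividing $N$ (the top prime $P$ of $m$ is always such a top). Along a chain $q_{i+1}\ge 2q_{i}$, so $t\le\log q_{t}/\log 2$ and $\sum_{i\ge 1}\log q_{i}\le\sum_{i\ge1}(\log q_{t}-i\log 2)\le(\log q_{t})^{2}/(2\log 2)$; summing over the chains below a fixed top prime $u\mid N$ and using $\sum_{u}(\log u)^{2}\le\bigl(\sum_{u}\log u\bigr)^{2}\le(\log N)^{2}$, together with the single extra factor $2$ that $\delta$ may contribute, gives $\log d(m)\le(\log N)^{2}/(2\log 2)+\log 2$, which (since $N\ge 3$) implies the displayed inequality. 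The chain estimate is exactly the point where the argument of \cite{ELP} enters; the rest is bookkeeping with the constants.

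For part (ii) I would argue by construction. First note that for squarefree $m$ one has $f(m)=d(m)=\gcd(m,\varphi(m))$: here $\rad m=m$, so it suffices to see that the factor $\gcd(m/f(m),\varphi(m)/\lambda(m))$ from Section~1 equals $1$, and this holds because $m/f(m)$ is the product of the primes $p\mid m$ with $p\nmid p'-1$ for every $p'\mid m$, whereas every prime divisor of $\varphi(m)/\lambda(m)=\prod_{p\mid m}(p-1)\big/\lcm_{p\mid m}(p-1)$ divides some such $p'-1$. It therefore suffices to build, for arbitrarily large parameters, a squarefree $m$ with $m/f(m)=m^{(1+o(1))\log\log\log m/\log\log m}$. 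I would take $m$ to be a product of primes arranged in $k+1$ levels: level $0$ is all primes up to some $z$; given level $i$, split its primes into blocks, and for each block $\mathcal B$ adjoin to level $i+1$ a prime $q\equiv 1\pmod{\prod_{p\in\mathcal B}p}$ chosen as small as known results on least primes in arithmetic progressions allow (Linnik for a single modulus, or Bombieri--Vinogradov after discarding a thin set of "bad" moduli, so as to keep the growth of the largest prime down to a factor $1+o(1)$ per level). Then every prime of $m$ outside the top level $k$ divides $q-1$ for some $q\mid m$, so $f(m)\ge m\big/\prod_{p\in\text{level }k}p$; since "covering" a product of primes costs, in logarithm, at least the logarithm of that product, the levels have comparable logarithms $\asymp\log m/(k+1)$, and the iteration can be pushed for about $k=(1+o(1))\log\log m/\log\log\log m$ levels before the primes involved leave the admissible range. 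Letting $z\to\infty$ then produces infinitely many $m$ with the stated lower bound; the chain construction used here is the "result from \cite{ELP}" referred to in the statement.

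The main obstacle in (i) is the chain bound with the sharp constant $1/(2\log 2)$: one must control carefully the multiplicities carried by the primes of $d(m)$ and the anomalous role of the prime $2$ and the factor $\delta$, so that the surplus beyond $(\log N)^2/(2\log 2)$ is only $O(1)$. In (ii) the crux is the quantitative calibration: over all $k\approx\log\log m/\log\log\log m$ levels the accumulated loss — from the sizes of the adjoined primes and from the discarded moduli — must remain within a factor $1+o(1)$, which is what pins the leading constant in the exponent to exactly $1$ and forces the block sizes and the parameter $z$ to be balanced precisely.
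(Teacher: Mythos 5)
Your reduction of part (i) to the inequality $\log 2\cdot\log d(m)\le\bigl(\log (m/d(m))\bigr)^2$ (after disposing of $m=1$ and powers of $2$, and noting that the largest odd prime factor of $m$ divides $m/d(m)$) is correct, but the step that carries all the weight is only asserted: the chain/tree estimate $\log d(m)\le(\log N)^2/(2\log 2)+\log 2$, with multiplicities $v_q(\varphi(\rad(m)))=\sum_{p\mid m}v_q(p-1)$ tracked across levels, with the prime $2$ and the factor $\delta$, and with several chains sharing a top. You flag this yourself as ``the main obstacle'', and it cannot be outsourced to \cite{ELP}: in this reformulation \cite[Theorem~5.1]{ELP} gives the constant $1/\log 2$, not your sharper $1/(2\log 2)$, so your key inequality (plausible though it looks) would need a genuine proof. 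The paper takes a much lighter route: writing $m=n\,\rad(m)$, it uses $d(m)\le\delta\varphi(\rad(m))\le\rad(m)$ together with $d(m)\mid n\,D(\rad(m))$ and a mild sharpening of the ELP bound for square-free arguments (Lemma~2.1: $D(r)\le r\exp(-\sqrt{\log 2\log r})$ for $r>1$, $r\ne 6$), and then balances the two bounds in $r=\rad(m)$; the constants $\sqrt 2$ and $(\log 2)^2/4$ come out of this balancing, with no new chain analysis needed.

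Part (ii) has a genuine gap: the upward, level-by-level covering construction cannot reach the exponent $1-(1+o(1))\log\log\log m/\log\log m$ with known results on primes in progressions. Each adjoined prime $q\equiv 1\pmod{\prod_{p\in\cB}p}$ exceeds the product it covers, so the logarithm of each level is at least that of the previous one and the uncovered top level contributes at least $\log m/(k+1)$; to make this as small as claimed you need the loss per level to be a factor $1+o(1)$ in the logarithm, i.e.\ least primes $p(1,d)\le d^{1+o(1)}$ for the highly structured moduli you use. Unconditionally one has only Linnik's $d^{O(1)}$, and even on average (\cite{BFI}, Bombieri--Vinogradov type results) or under GRH the exponent is $2+o(1)$; with a factor $\ge 2$ per level the scheme caps out near $f(m)\approx m^{1/2}$, the Sophie Germain--type bound recalled in the introduction, not $m^{1-o(1)}$. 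Nor is this ``the chain construction of \cite{ELP}'': the paper goes in the opposite, downward direction, where no least-prime input is needed. It takes a seed $t$, lets $F(t)$ be the product of the primes dividing some iterate $\varphi_k(t)$, $k\ge1$, but not $t$, and sets $m=tF(t)$; every prime of $F(t)$ then automatically divides $\lambda(m)$, so $F(t)\mid f(m)$, and the quantitative engine is \cite[Theorem~3]{LP2}: for a set of $t$ of asymptotic density $1$ one has $\varphi(tF(t))\ge t^{(1+o(1))\log\log t/\log\log\log t}$, whence $t\le m^{(1+o(1))\log\log\log m/\log\log m}$ and $f(m)\ge F(t)=m/t\ge m^{1-(1+o(1))\log\log\log m/\log\log m}$. (Your correct observation that $f(m)=d(m)=D(m)$ for square-free $m$ would also let you quote the lower bound of \cite[Theorem~5.1]{ELP} directly; what does not work is re-deriving that lower bound by the upward covering scheme you describe.)
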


When $m$ is square-free, it is easy to see that $d(m)=f(m)$. 
Our next result is to show that this is almost always true.

\begin{theorem}
\label{thm:fm=dm}
The set of positive integers $m$ such that $d(m)=f(m)$ is of asymptotic density $1$. 
\end{theorem}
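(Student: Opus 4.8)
The plan is to show that the exceptional set
$$
\cE = \{m \ge 1 : d(m) \neq f(m)\}
$$
has density zero. Recall from the discussion above that $d(m)/f(m) = \gcd(m/f(m), \varphi(m)/\lambda(m))$, so $m \in \cE$ precisely when $\varphi(m)/\lambda(m)$ shares a common factor larger than $1$ with $m/f(m)$; in particular $m \in \cE$ forces $\varphi(m)/\lambda(m) > 1$, which already means $m$ is not squarefree or else has at least two odd prime factors $p, q$ with $\gcd(p-1, q-1) > 1$. So the first step is to isolate the arithmetic obstruction: if $\ell$ is a prime dividing $\gcd(m/f(m), \varphi(m)/\lambda(m))$, then $\ell \mid \varphi(m)/\lambda(m)$, which (by the standard formula for $\varphi/\lambda$ as a product of $\gcd$'s of the $\lambda(p_i^{r_i})$) means that either $\ell^2$ divides some $p_i - 1$, or $\ell$ divides $\gcd(p_i-1, p_j-1)$ for two distinct prime power factors, or $\ell = p_i$ with $r_i \ge 2$ contributing an extra $p_i$-power. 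Each of these is a "coincidence" condition whose density I can bound.

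The second step is to dispose of the non-squarefree contribution: the number of $m \le x$ divisible by $p^2$ for some prime $p \ge z$ is $O(x/z)$, so up to a set of size $o(x)$ we may assume the squarefull part of $m$ is bounded, say at most $(\log x)$ or even any slowly growing function, and in fact the squarefree $m$ already contribute nothing to $\cE$. The remaining and main step is to handle squarefree-like $m$ with two odd prime factors $p < q$ and a prime $\ell \mid \gcd(p-1, q-1)$ that also divides $m/f(m)$. Here I would count: for fixed $\ell$, the number of $m \le x$ having two distinct prime factors each $\equiv 1 \pmod \ell$ is at most $x \cdot (\sum_{p \le x,\ p \equiv 1 (\ell)} 1/p)^2 = O\big(x (\log\log x)^2 / \ell^2\big)$ by Mertens-type estimates for primes in progressions (this is the key analytic input; one must be slightly careful for small $\ell$ versus large $\ell$, splitting at $\ell \le \log x$ say, but the tail $\sum_{\ell > L} 1/\ell^2$ converges and for $\ell \le L$ one uses that such $m$ is divisible by the product of two primes $\ge \ell$, giving an extra saving). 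Summing over $\ell \le x$ — really only $\ell$ up to about $\sqrt{x}$ can occur — gives a bound $o(x)$, the main point being $\sum_\ell 1/\ell^2 < \infty$ with the $(\log\log x)^2$ factor harmless.

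I expect the main obstacle to be the uniform bookkeeping in this last step: one needs a single estimate covering all primes $\ell$ simultaneously and all ways a small prime can appear in $\varphi(m)/\lambda(m)$ (the $\ell^2 \mid p-1$ case and the prime-power case need separate, easier sub-arguments — the former is $O(x \sum_\ell \ell^{-2} \log\log x) = o(x)$ directly, the latter is contained in the squarefull estimate). Once those are in place the conclusion $\#\cE \cap [1,x] = o(x)$ is immediate, which together with $d(m) = f(m)$ for squarefree $m$ yields asymptotic density $1$. A cleaner packaging: since $\varphi(m)/\lambda(m) \mid \prod_i \lambda(p_i^{r_i}) / \lcm_i \lambda(p_i^{r_i})$, any prime $\ell$ dividing it is counted with multiplicity by pairs of prime-power divisors of $m$ sharing the factor $\ell$ in their $\lambda$-values, and a double application of the Turán–Kubilius heuristic (or a direct convolution bound) controls the count; I would present it via the elementary sieve bound above rather than invoking Turán–Kubilius formally, to keep it self-contained.
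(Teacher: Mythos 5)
There is a genuine gap in your central counting step. Membership of $m$ in your exceptional set $\cE$ requires a prime $\ell$ dividing $\gcd\bigl(m/f(m),\varphi(m)/\lambda(m)\bigr)$, but the event you actually bound is the much weaker one that $\ell\mid\varphi(m)/\lambda(m)$ (i.e.\ that $m$ has two distinct prime factors $\equiv 1\pmod{\ell}$), possibly together with $\ell\mid m$. For a fixed small $\ell$ this event is not rare at all: almost every integer has many prime factors $\equiv 1\pmod{\ell}$, so $\ell\mid\varphi(m)/\lambda(m)$ holds on a set of density $1$, and indeed your bound $O\bigl(x(\log\log x)^2/\ell^2\bigr)$ exceeds $x$ exactly in the range $\ell\ll\log\log x$ where you would need it; the promised ``extra saving'' for $\ell\le L$ cannot exist, because the event being counted genuinely has positive density there. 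What makes $\cE$ sparse is the other factor of the gcd: one must show that for almost all $m$, every small prime $\ell\mid m$ contributes its full power $\ell^{v_\ell(m)}$ to $f(m)$, equivalently that $\lambda(m)$ carries a sufficiently high power of $\ell$ (note, for instance, that for odd $\ell$ with $\ell\,\|\,m$ no discrepancy at $\ell$ is even possible, so the true rare event for small $\ell$ is ``$\ell^2\mid m$ and $m$ has \emph{no} prime factor $\equiv 1\pmod{\ell^a}$ for a suitable power $\ell^a$'', an absence statement rather than the presence statement you sieve for). Making this uniform over all small prime powers simultaneously is exactly the Luca--Pomerance-type input the paper uses (Lemma~\ref{lem:q1q2}: for almost all $m\le x$ and every prime power $p^a\le\log\log x/\log\log\log x$, $m$ has two distinct prime factors $\equiv 1\pmod{p^a}$), and it is the heart of the proof. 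Your sieve estimates correspond only to the easy portions of the paper's argument (its sets $\cE_2,\cE_3,\cE_4$, which remove large prime divisors of $d(m)$ and large squarefull divisors via Brun--Titchmarsh, Mertens, and the squarefull-divisor count); the small-modulus analysis of $f(m)$ is missing, and without it the union over $\ell$ cannot be made $o(x)$.

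A secondary, more minor point: your structural description of primes dividing $\varphi(m)/\lambda(m)$ is slightly off. Having $\ell^2\mid p_i-1$ for a single index $i$ does not by itself force $\ell\mid\varphi(m)/\lambda(m)$; the correct criterion is that $\ell$ divides $\lambda(p_i^{r_i})$ for at least two distinct indices $i$ (with a small caveat at $\ell=2$). Listing extra cases is harmless for an upper bound, but it signals that the reduction from $d(m)\ne f(m)$ to countable ``coincidence'' events was not carried out precisely, which is where the main gap above originates.
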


More precisely, according to the proof of Theorem~\ref{thm:fm=dm},  for sufficiently large $x$ and for all positive integers $m\le x$ outside a subset of cardinality $o(x)$, we have 
\begin{equation} \label{eq:dfm}
d(m)=f(m)=\prod_{\substack{1\le j \le k\\ p_j< y/\log y}} p_j^{r_j}. 
\end{equation}
 where $y=\log\log x$, and we have assumed that $m$ has the prime factorization~\eqref{eq:1}. Furthermore, we can replace $ y/\log y$ by $y$ in~\eqref{eq:dfm}. Indeed, it is easy to see that the set of positive integers not exceeding $x$ and divisible by a prime in the interval $[ y/\log y,y)$ has asymptotic density 0 by considering the reciprocal sum of the primes $p$ in the interval and using the Mertens formula 
 \begin{equation}\label{eq:Mert}
\sum_{\textrm{prime $p\le t$}} \frac{1}{p}  = \log \log t + A + O(1/\log t), \qquad t \ge 3,
\end{equation}
with some constant $A$, see~\cite[Equation~(2.15)]{IwKow}. 

Finally, since $f(m)\mid d(m)$ and $f(m)=d(m)$ for almost all $m$, it makes sense to ask which values are possible for $f(m)$ and $d(m)$.  
One may conjecture that 
 for each fixed positive integers $a$ and $b$ with $a \mid b$ there exists $m$ such
that $(f(m),d(m))=(a,b)$. We now establish this for  large families of pairs $(a,b)$
but also show that this conjecture is false in general. 

  Before we formulate our next result we need to recall that  the notations  $U \ll V$ and $U = O(V)$, are equivalent to $|U|  \le c V$ for some constant $c>0$. 
As usual, $U = o(V)$ means that $U/V \to 0$, and $U \sim V$ means that $U/V \to 1$.

Recall that \textit{Linnik's Theorem} asserts that there exists a positive number $L$, 
known as \textit{Linnik's constant}, such that,
if $p(a,d)$ denotes the smallest prime in the arithmetic progression $\{a+nd: n\ge 0\}$ for integers 
$1\le a < d$ with $\gcd(a,d)=1$, then $p(a,d) \ll d^L$. 
It is known~\cite{BFI} that $L \le 2$ for almost all integers $d$. 
Currently, the best general estimate is $L \le 5$, due to Xylouris~\cite{Xyl2} (see also~\cite{Xyl1} 
for $L = 5.18$, which improves the 
previous bound $L \le 5.5$ of Heath-Brown~\cite{H-B1}); see also Section~\ref{sec:comm}
below for further comments on the choice of $L$.

\begin{theorem}
\label{thm: pair df} 
We have:
\begin{itemize}
\item[(i)] Given any positive integer $n$, there exists $m\ll n^{2L+1}$ such that $d(m)=f(m)=n$. 

\item[(ii)] Given any positive integer $n$, there exists $m\ll n^{6L+3}$ such that $d(m)/f(m)=n$. 

\item[(iii)] Let $a,b$ be two positive integers such that  $a\mid b$. Assume that  $\gcd(b/a,a\varphi(\rad(b)))=1$. Then, there exists $m\ll b^{2L+2}/a$ such that $(f(m),d(m))=(a,b)$.
\end{itemize}
\end{theorem}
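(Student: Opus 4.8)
The strategy in all three parts is the same: we build $m$ out of carefully chosen prime powers so that the formulas
$$
d(m)=\gcd(m,\delta\varphi(\rad(m))),\qquad f(m)=\gcd(m,\delta\lambda(m)\varphi(\rad(m))/\varphi(m))
$$
evaluate to the prescribed quantities. The basic building block is a prime $p$ with $p\equiv 1\pmod N$ for a suitable modulus $N$, supplied by Linnik's theorem with $p\ll N^{L}$; the point of the various exponents $2L+1$, $6L+3$, $2L+2$ is simply to keep track of how many such primes get multiplied together and with what moduli. Throughout I will take $m$ odd (so $\delta=1$), which removes the nuisance factor; the coprimality hypotheses in (iii) are exactly what is needed to make this consistent with the target values.

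\emph{Part (i).} Write $n=q_1^{s_1}\cdots q_t^{s_t}$. The idea is to choose $m=n\cdot P$ where $P=p_1\cdots p_t$ is a product of \emph{distinct} auxiliary primes, one for each $q_i$, with $p_i\equiv 1\pmod{q_i^{s_i}}$ and the $p_i$ all larger than $n$ and distinct. Then $q_i^{s_i}\mid p_i-1\mid\varphi(\rad(m))$, so $n\mid\gcd(m,\varphi(\rad(m)))=d(m)$; conversely one arranges (by also asking the $p_i$ to avoid dividing $\varphi$ of the other primes, e.g. by a further congruence or by taking them in increasing order via Linnik) that no prime power beyond $n$ survives the gcd, giving $d(m)=n$. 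Since $m$ is squarefree away from $n$ and one checks $\varphi(m)/\lambda(m)$ contributes nothing to the discrepancy, $f(m)=d(m)=n$ as well. Each $p_i\ll q_i^{s_i L}$, and the product of moduli used (including forcing distinctness) is $\ll n^{2}$, so $m=nP\ll n\cdot n^{2L}=n^{2L+1}$.

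\emph{Part (ii).} Here we must \emph{create} a gap between $f$ and $d$, which by
$$
d(m)/f(m)=\gcd(m/f(m),\varphi(m)/\lambda(m))
$$
means engineering a controlled amount of $\varphi/\lambda$. Recall $\varphi(m)/\lambda(m)$ is built from overlapping factors $p-1$ among the prime divisors of $m$. So for $n=q_1^{s_1}\cdots q_t^{s_t}$ I would, for each $i$, introduce \emph{two} primes $p_i,p_i'$ with $p_i-1$ and $p_i'-1$ both divisible by $q_i^{s_i}$ (and with the overlap being exactly $q_i^{s_i}$), contributing a factor $q_i^{s_i}$ to $\varphi(m)/\lambda(m)$; then adjoin a further prime $\pi_i$ with $q_i^{s_i}\mid\pi_i-1$ to make this survive the outer gcd, while choosing everything coprime enough that nothing else contributes. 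This triples the number of Linnik-primes and roughly doubles the moduli relative to part (i), which is the source of the exponent $6L+3$. The bookkeeping — proving the gcds are \emph{exactly} $n$ on the nose, not a multiple — is the fussy part.

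\emph{Part (iii), and the main obstacle.} Given $a\mid b$ with $\gcd(b/a,a\varphi(\rad(b)))=1$, I would take $m=b\cdot R$ where $R$ is a product of auxiliary primes chosen so that: (1) the ``$a$-part'' is pinned down the way $f$ and $d$ agree on it, using primes $p\equiv1\pmod a$ as in part (i); (2) the extra factor $b/a$ is forced into $d(m)$ but kept out of $f(m)$ by arranging that $b/a$ divides $\varphi(m)/\lambda(m)$ — feasible precisely because $\gcd(b/a,a\varphi(\rad(b)))=1$ guarantees that the primes of $b/a$ are ``new'' and do not already appear, so we have room to double them up as in part (ii). The bound $m\ll b^{2L+2}/a$ then comes from $R\ll b^{2L+1}$ combined with the fact that $b/a$ rather than $b$ governs one of the moduli. \textbf{The main difficulty throughout is the \emph{exactness} of the two gcds}: it is easy to get $n\mid d(m)$, $n\mid f(m)$, etc., but ruling out \emph{extra} prime-power factors sneaking into $\gcd(m,\varphi(\rad(m)))$ or into $\varphi(m)/\lambda(m)$ requires choosing the auxiliary primes to satisfy simultaneous non-divisibility conditions, which one handles by imposing a few more congruences compatible with Linnik (so that the moduli, and hence the final size of $m$, grow only by bounded powers) — this is where the precise exponents $2L+1$, $6L+3$, $2L+2$ are finally read off. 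The disproof of the general conjecture (referred to just before the theorem) will presumably come from showing that the hypothesis $\gcd(b/a,a\varphi(\rad(b)))=1$ cannot always be met, e.g. parity obstructions force certain pairs $(a,b)$ to be unrealizable.
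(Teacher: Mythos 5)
There are genuine gaps, and they all concern the same point you yourself flag as ``the fussy part'': the exact valuations of the two gcds. In part (i) your choice $p_i\equiv 1\pmod{q_i^{s_i}}$ is not strong enough when $n$ is not squarefree. Writing $m=nP$ with $P$ squarefree and coprime to $n$, one has $\varphi(m)/\varphi(\rad(m))=n/\rad(n)$, so $f(m)=\gcd\bigl(m,\delta\lambda(m)\rad(n)/n\bigr)$, and to get $q^{s}\mid f(m)$ for $q^{s}\,\|\,n$ (with $q$ odd) you need $v_q(\lambda(m))\ge 2s-1$, whereas your congruences only guarantee $v_q(\lambda(m))\ge s$. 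Concretely, for $n=q^2$ and a prime $p$ with $q^2\,\|\,p-1$, your $m=q^2p$ has $f(m)=q\ne n$ even though $d(m)=q^2$. The paper avoids this by taking a single prime $p\equiv 1\pmod{n^{2}}$ and $m=np$, so that $n^{2}\mid\lambda(m)$; this is exactly where the exponent $2L+1$ comes from. (Also, your blanket convention ``take $m$ odd so $\delta=1$'' is impossible whenever $n$, resp.\ $b$, is even, since $n\mid d(m)\mid m$.)

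Parts (ii) and (iii) have a structural problem beyond bookkeeping. For an odd prime $\ell$, if $v_\ell(d(m))>v_\ell(f(m))$ then necessarily $\ell^{2}\mid m$: indeed $\ell\mid d(m)$ forces some prime $r\mid m$ with $r\equiv1\pmod\ell$ (when $\ell\nmid\varphi(\rad(b))$), hence $\ell\mid\lambda(m)$, and since $f(m)=\gcd\bigl(m,\delta\lambda(m)\rad(m)/m\bigr)$ with $v_\ell(\lambda(m))\ge v_\ell(m)-1$ always, a deficit in $f$ at $\ell$ can only occur if $v_\ell(m)\ge 2$. Thus your part (iii) ansatz $m=b\cdot R$ with $R$ a product of new auxiliary primes cannot even realize the simplest admissible pair $(a,b)=(1,\ell)$, and your part (ii) mechanism of two primes $p_i,p_i'$ with overlapping $p_i-1,p_i'-1$ contributes to $\varphi(m)/\lambda(m)$ but not to $d(m)/f(m)=\gcd(m/f(m),\varphi(m)/\lambda(m))$ unless high powers of the $q_i$ are also built into $m$ --- which your sketch never does. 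The paper's constructions do precisely this: in (ii) it takes $m=4n^{2}\lambda_2 q$ with one prime $q\equiv 1+8n^{3}\lambda_2^{2}/\rad(n)\pmod{8n^{4}\lambda_2^{2}/\rad(n)}$ (where $\lambda(n)=\lambda_1\lambda_2$, $\rad(\lambda_2)\mid n$, $\gcd(\lambda_1,n)=1$), so that $\lambda(m)=c(q-1)$ with $\gcd(c,n)=1$ and the discrepancy $n$ comes from $\varphi(4n^{2}\lambda_2)$; in (iii) it takes $m=ac^{2}q$ (note $c^{2}$, not $c$) with $q\equiv 1+a^{2}c\pmod{a^{2}c^{2}}$, and the case $c$ even is treated separately via $b=2^{r}$. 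Your exponents $6L+3$ and $2L+2$ are read off from the theorem statement rather than derived from a working construction, so as it stands the proposal does not establish parts (ii) and (iii), and part (i) only for squarefree $n$.
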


We remark that the co-primality assumption in Theorem \ref{thm: pair df} (iii) might be strong. 
Because Erd\H os \cite{Erdos} has shown that the set of positive integers $n$ with $\gcd(n,\varphi(n))=1$ is of asymptotic density 0. 
However, when the assumption in Theorem~\ref{thm: pair df} (iii) does not hold, the situation becomes unstable. We give some examples as follows.

\begin{theorem} 
\label{thm:except}
We have:
\begin{itemize}
\item[(i)] Let $n$ be an odd positive integer greater than $1$. Then,  there does not exist an integer $m\ge 1$ such that $(f(m),d(m))=(n,2n)$ or $(n,4n)$.
\item[(ii)] Let $p,q$ be two odd primes such that $p<q, p\mid q-1$ and $p^2 \nmid q-1$. Then, there exists an integer $m \ll p(pq)^{L+1}$ such that $(f(m),d(m))=(q,pq)$.
\end{itemize}
\end{theorem}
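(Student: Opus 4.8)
The plan is to prove both parts by working directly with the closed formulas
$$
d(m)=\gcd(m,\delta\varphi(\rad(m))),\qquad f(m)=\gcd\!\left(m,\,\delta\lambda(m)\varphi(\rad(m))/\varphi(m)\right),
$$
and comparing, prime by prime, the exponents on the two sides. Writing $m=p_1^{r_1}\cdots p_k^{r_k}$ as in~\eqref{eq:1}, one first notes that $\varphi(\rad(m))/\varphi(m)=1/\prod_{i}p_i^{r_i-1}$, so that $f(m)=\gcd(m,\delta\lambda(m)/\prod_i p_i^{r_i-1})$, the second argument being an integer (the only delicate point is the prime $2$, where one uses $\delta=2$ once $4\mid m$).

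For part~(i), suppose to the contrary that $(f(m),d(m))=(n,2n)$ or $(n,4n)$ with $n$ odd and $n>1$. Since $n>1$, some odd prime divides $f(m)$, hence $m$; since $d(m)$ is even, $2\mid m$, say $v_2(m)=r_0\ge 1$. The first step is to show $r_0\ge 3$: as $m$ has an odd prime factor one has $v_2(\lambda(m))\ge 1$, and a direct check of the cases $r_0=1$ (where $\delta=1$) and $r_0=2$ (where $\delta=2$ and $\lambda(4)\mid\lambda(m)$) gives $v_2(f(m))\ge 1$ in both, contradicting that $f(m)=n$ is odd; hence $r_0\ge 3$ and $\delta=2$. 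Then
$$
v_2(d(m))=\min\!\Big(r_0,\ 1+\sum_{\substack{p\mid m\\ p\ \mathrm{odd}}}v_2(p-1)\Big)\ \ge\ \min(3,2)=2,
$$
which already excludes the case $(n,2n)$. In the case $(n,4n)$ the displayed quantity must equal $2$, which (as $r_0\ge 3$) forces $\sum_{p\mid m,\,p\ \mathrm{odd}}v_2(p-1)=1$, i.e.\ $m$ has exactly one odd prime divisor $p$; but then $\delta\varphi(\rad(m))=2(p-1)$ is coprime to $p$, so $d(m)=\gcd(m,2(p-1))$ is a power of $2$, necessarily equal to $4$, whence $n=1$, a contradiction. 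This proves~(i).

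For part~(ii), the plan is to take $m=p^2q\ell$ for a suitably chosen prime $\ell$. Since $p\ge 3$, there is a residue class modulo $pq$, coprime to $pq$, whose members are $\equiv 1\pmod q$ and $\not\equiv 1\pmod p$; by Linnik's theorem it contains a prime $\ell\ll (pq)^L$, and necessarily $\ell>q>p$, so $\ell\notin\{p,q\}$. For $m=p^2q\ell$ we have $\delta=1$, $\rad(m)=pq\ell$, $\varphi(\rad(m))=(p-1)(q-1)(\ell-1)$, $\varphi(m)=p\,\varphi(\rad(m))$ and $\lambda(m)=\lcm[p(p-1),q-1,\ell-1]$. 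Now one invokes the hypotheses: $p\mid q-1$ and $p^2\nmid q-1$ give $v_p(q-1)=1$; the choice of $\ell$ gives $v_p(\ell-1)=0$ and $v_q(\ell-1)\ge 1$; and $\ell>q>p$ makes $\ell$ coprime to $(p-1)(q-1)$. A prime-by-prime exponent comparison then gives $d(m)=\gcd(m,\varphi(\rad(m)))=pq$ and $f(m)=\gcd(m,\lambda(m)/p)=q$; here $v_p(\lambda(m))=1$ is what makes $v_p(\lambda(m)/p)=0$, so the factor $p$ disappears from $f(m)$ while surviving in $d(m)$. Finally $m=p^2q\ell\ll p^2q(pq)^L=p(pq)^{L+1}$, as required.

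The genuinely delicate point is the $2$-adic bookkeeping in part~(i), owing to the exceptional value $\lambda(2^r)=2^{r-2}$ for $r\ge 3$ and the factor $\delta$; conceptually, though, the content is simple: under the constraints of~(i) the condition that $f(m)$ be odd forces $8\mid m$, this forces $4\mid d(m)$, and in the borderline subcase $d(m)=4n$ it forces $m$ to have a single odd prime factor, whence $n=1$. Part~(ii) offers no real obstacle beyond verifying that the prescribed residue class for $\ell$ is nonempty and coprime to $pq$ and that the exponent comparisons come out as stated.
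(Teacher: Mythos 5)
Your proof is correct and follows essentially the same approach as the paper: part~(ii) is the identical construction $m=p^2q\ell$ with $\ell$ chosen via Linnik's theorem in a reduced class modulo $pq$ that is $\equiv 1\pmod q$ but $\not\equiv 1\pmod p$, and part~(i) rests on the same $2$-adic comparison of $m$, $d(m)$ and $f(m)$ via the closed formulas for these functions. The only notable difference is that you work out the $(n,4n)$ case in full (showing $8\mid m$ forces a single odd prime divisor and $d(m)=4$, hence $n=1$), a step the paper compresses into ``similarly''; your valuation bookkeeping there is accurate.
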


\begin{remark}
{\rm
From the proofs in Sections \ref{sec:pair df} and \ref{sec:except}, one can see that for each result in Theorem \ref{thm: pair df} 
and in Theorem \ref{thm:except} (ii), there are infinitely many such integers $m$ if we drop the boundedness condition. 
}
\end{remark}

\section{Proof of Theorem~\ref{thm:dm}}
\label{sec:dm}

The closely related function
$$
D(m)=\gcd(m, \varphi(m))
$$
for square-free integers $m \ge 1$ has been studied in~\cite{ELP}. For example, 
it is shown in~\cite[Theorem~5.1]{ELP} that
for all square-free integers $m\ge 1$ we have
\begin{equation}
\label{eq:D Up}
D(m)\le 2m \exp(-{\sqrt{\log 2 \log m}}), 
\end{equation}
and for infinitely many square-free integers $m\ge 1$ we have
\begin{equation}
\label{eq:D Low}
D(m)\ge m^{1-(1+o(1))\log\log\log m/\log\log m}.
\end{equation} 

Since $d(m)=D(m)$ for square-free $m$, by~\eqref{eq:D Low} there are infinitely many integers $m\ge 1$ such that 
$$
d(m)\ge m^{1-(1+o(1))\log\log\log m/\log\log m}.
$$
Here, we want to establish a similar result for $f(m)$, as well as a non-trivial upper bound for $d(m)$. 

We start with an observation that a  modification
of the argument in the proof of~\cite[Theorem~5.1]{ELP}
allows us to obtain the following improvement upon~\eqref{eq:D Up}.

\begin{lemma} 
\label{lem:Dm}
For any square-free integer $m > 1$ and $m \ne 6$, we have
$$
D(m) \le  m \exp\(-\sqrt{\log 2 \log m}\).
$$
\end{lemma}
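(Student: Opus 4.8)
The plan is to refine the level-counting argument behind the proof of \cite[Theorem~5.1]{ELP}, carrying an extra $\log 2$ of slack so as to drop the constant $2$ in \eqref{eq:D Up}, and to use the parity of $m$ to isolate the single exception $m=6$. Write $m=q_1\cdots q_k$ with $q_1<\cdots<q_k$ prime. Since $m$ is square-free, $D(m)$ is the product of those $q_i$ that divide $\varphi(m)=\prod_j(q_j-1)$, that is, of those $q_i$ for which $q_j\equiv 1\pmod{q_i}$ for some $j$; call the remaining prime divisors of $m$ \emph{terminal}, so that $n:=m/D(m)$ is the product of the terminal primes. The largest prime $q_k$ is always terminal, so $n>1$, and since $D(m)=m/n$ it suffices to prove $\log n\ge\sqrt{\log 2\,\log m}$. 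The argument will in fact yield the sharper bound
\begin{equation}
\label{eq:plan-target}
\log m\ \le\ \frac{(\log n)^2}{\log 2}-\log 2\qquad (m>1\ \text{odd},\ m\ne 3),
\end{equation}
together with the unconditional estimate $\log m\le(\log n)^2/\log 2$ valid for every odd $m>1$, which covers $m=3$.

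To prove these I would first treat odd $m$ by assigning a \emph{level} to each prime $q_i$: a terminal prime has level $0$, and a non-terminal $q_i$ is given level one more than the level of a chosen prime $q_j\mid m$ with $q_j\equiv 1\pmod{q_i}$; as $q_i<q_j$, iterating the choice reaches a terminal prime, so the levels are well defined. Let $n_\ell$ be the product of the primes of level $\ell$, so that $n_0=n$ and $m=\prod_\ell n_\ell$. For $\ell\ge 1$ each prime of level $\ell$ divides $q-1$ for some prime $q$ of level $\ell-1$, so $n_\ell$ divides the product of $q-1$ over the primes $q$ of level $\ell-1$. Parity now enters: for odd $m$ all these primes $q$ are odd, hence that product is even while $n_\ell$ is odd, which forces $n_\ell\le\tfrac12\prod(q-1)<\tfrac12 n_{\ell-1}$; iterating gives $n_\ell<n/2^{\ell}$ and $\log n_\ell\le\log n-\ell\log 2$.

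A nonempty level consists only of primes $\ge 3$, so if $J$ denotes the last nonempty level then $3\le n_J$, and for $J\ge 1$ also $n_J<n/2^{J}$, whence $J<\log(n/3)/\log 2$. Summing $\log n_\ell\le\log n-\ell\log 2$ over $0\le\ell\le J$ and substituting this bound on $J$ leaves a one-variable inequality in $\log n$; clearing denominators turns it into a quadratic inequality that holds for all $n\ge 5$. For $n=3$ (equivalently $m=3$) one falls back to the weaker target $(\log n)^2/\log 2$, whose quadratic inequality has negative discriminant and therefore holds with no restriction. For even $m$ I would reduce to the odd case: writing $m=2m''$ with $m''$ odd, a prime congruent to $1$ modulo an odd prime factor of $m$ is forced to be an odd prime factor of $m''$, so $m$ and $m''$ have the same terminal primes, and $2$ itself is non-terminal whenever $m''>1$; hence $n=m''/D(m'')$ while $\log m=\log m''+\log 2$.

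Applying \eqref{eq:plan-target} to $m''$ (for $m''>1$ with $m''\ne 3$) then gives $(\log n)^2\ge\log 2\,(\log m''+\log 2)=\log 2\,\log m$, which is precisely what is required; the two remaining cases $m=2$ and $m''=3$ are checked directly, the latter being the excluded value $m=6$. Since the largest prime factor of $m''$ is terminal, $n=3$ forces $m''=3$, so no even $m\ne 6$ slips through. The delicate point throughout is the book-keeping in the odd step: the level-counting estimate for $m''$ must beat $(\log n)^2/\log 2$ by a margin of at least $\log 2$, so that the extra $\log 2$ contributed by the prime $2$ in $m=2m''$ can still be absorbed, and one must verify that this margin survives for every admissible value of $n$ except $n=3$.
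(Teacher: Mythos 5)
Your argument is correct, but it takes a genuinely different route from the paper's. The paper does not reprove the odd case: it invokes the proof of \cite[Theorem~5.1]{ELP}, which already gives the bound when $m$ is odd or when the largest prime factor exceeds $\exp(\sqrt{\log 2\log m})$, and then handles the only remaining case (even $m$ with all prime factors at most $\exp(\sqrt{\log 2\log m})$) by a direct computation: writing $m=2p_2\cdots p_k$, it bounds $D(m)\le m/(2^{k-2}p_2)$ using the evenness of each $p_j-1$, and combines this with $k-2\ge\log(m/(2p_2))/\log p_k$. You instead rebuild everything from scratch: the level decomposition $m=\prod_\ell n_\ell$ with $n_\ell<n_{\ell-1}/2$ (the halving again coming from parity of $q-1$), the bound $J<\log(n/3)/\log 2$ on the number of levels, and the resulting quadratic inequality yield the strengthened estimate $\log m\le(\log n)^2/\log 2-\log 2$ for odd $m\ne 3$, from which the even case follows by stripping off the factor $2$, with $m=2,3,6$ checked by hand. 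I verified the quantitative claims you leave implicit: with $x=\log n$, the case $J\ge 1$ reduces (after substituting $J=(x-\log 3)/\log 2$, legitimate since the parabola increases up to $x/\log 2-1/2$) to $x^2-x\log 2+\bigl((\log 3)^2-\log 2\log 3-2(\log 2)^2\bigr)\ge 0$, whose larger root is about $1.144$, and the case $J=0$ needs $x^2-x\log 2-(\log 2)^2\ge 0$, root about $1.122$; both hold since odd $m>1$, $m\ne 3$ forces $n\ge 5$, i.e.\ $x\ge\log 5\approx 1.609$. (One small wording slip: for $m=3$ the relevant weak inequality is the $J=0$ one, $x(x-\log 2)\ge 0$, which has nonnegative discriminant but holds because $\log 3>\log 2$; this does not affect anything.) The trade-off is clear: the paper's proof is shorter but uses the internal details of the ELP argument as a black box, whereas yours is self-contained and isolates explicitly the extra additive $\log 2$ of slack in the odd case that absorbs the prime $2$ — exactly the mechanism that removes the factor $2$ from \eqref{eq:D Up} and leaves $m=6$ as the unique exception.
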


\begin{proof} 
First, the case $k=1$ can be checked directly, 
and thus we can assume that $k\ge 2$. 
Now, suppose that $m$ has the prime factorization~\eqref{eq:1}  such that 
$$
p_1< \cdots < p_k.
$$

In the proof of~\cite[Theorem~5.1]{ELP} it has been showed that the desired result is true when one of the following conditions holds:
\begin{enumerate}
\item $p_k > \exp\(\sqrt{\log 2 \log m}\)$,

\item  $m$ is odd.

\end{enumerate}
So, to complete the proof, in the sequel we assume that $m$ is even, that is,
\begin{equation}
\label{eq:p1 pk}
p_1=2 \mand p_k \le \exp\(\sqrt{\log 2 \log m}\).
\end{equation}

If $k=2$, then we must have $p_2 > 3$ since $m\ne 6$. In this case, we have $m=2p_2$ and $D(m)=2$. 
So, the result can also be checked directly. 

Now, we assume that $k \ge 3$, and then $m\ge 30$. 
Then, we deduce that 
\begin{equation*}
\begin{split}
D(m) & =  \gcd(2p_2\ldots p_k, (p_2-1)\ldots (p_k-1)) \\
& =  2 \gcd\(p_2p_3\ldots p_k, \frac{p_2-1}{2}\ldots \frac{p_k-1}{2}\) \\
& =  2 \gcd\(p_2p_3\ldots p_k, \frac{p_3-1}{2}\ldots \frac{p_k-1}{2}\) \\
& \le  2\left(\frac{p_3-1}{2}\right)\ldots \left(\frac{p_k-1}{2}\right) \\
& =   \frac{2 \varphi(m)}{(p_2-1)2^{k-2}} \le \frac{m}{2^{k-2}p_2},
\end{split}
\end{equation*}
since $m$ is even, see~\eqref{eq:p1 pk}. 
Besides, also by~\eqref{eq:p1 pk}, we have
$$
k - 2 \ge \frac{\log (m/(2p_2))}{\log p_k} \ge \frac{\log (m/(2p_2))}{\sqrt{\log 2 \log m}}.
$$
So
$$
D(m) \le  \frac{m}{p_2} \exp\(-\sqrt{\log 2 \log m} + \sqrt{\log 2 / \log m}\log(2p_2)\). 
$$
Thus, the result follows if 
\begin{equation} \label{eq:log2m}
\exp\(\sqrt{\log 2 / \log m}\log(2p_2)\) \le p_2. 
\end{equation} 
Since $m\ge 30$, the inequality~\eqref{eq:log2m} is implied in the following inequality 
$$
\sqrt{2p_2} \le p_2, 
$$
which is definitely true since $p_2 \ge 3$, and we conclude the proof. 
\end{proof}

We are now ready to prove  Theorem~\ref{thm:dm}.

\begin{proof}[Proof of Theorem~\ref{thm:dm}]
(i) We write $m=n r$, where $r=\rad(m)$.
If $r=6$, then $m=2^{s_1}3^{s_2}$ with $s_1\ge 1$ and $s_2\ge 1$. It is easy to check this case by a direct computation. In the following, we assume that $r\ne 6$.

On one hand,
\begin{equation}
\label{eq:2}
d(m)=\gcd(nr, \delta\varphi(r))\le \delta \varphi(r)\le r,
\end{equation}
where we have used the fact that $\delta \varphi(r)\le r$ (this is obvious for $\delta=1$, and when $\delta=2$, then $r$ is even, so $\varphi(r)\le r/2=r/\delta$). On the other hand,
$$
d(m)  =  \gcd(nr,\delta\varphi(r))\mid nD(r).
$$
Using Lemma~\ref{lem:Dm} (note that $r\ne 6$), we have
\begin{equation}
\label{eq:3}
d(m)   \le   nr \exp\left(-{\sqrt{\log 2 \log r}}\right) = m \exp\left(-{\sqrt{\log 2 \log r}}\right).
\end{equation} 

Using~\eqref{eq:2} for $r \le \sqrt{2} m \exp\left(-{\sqrt{\log 2 \log m + (\log 2)^2/4}}\right)$ and using~\eqref{eq:3} otherwise, 
we complete the proof.

(ii) Let $\varphi_k(n)$ be the $k$-th iterate of the Euler function at $n$. By convention, we set $\varphi_0(n)=n$ and $\varphi_1(n)=\varphi(n)$. For positive integer $n$, define $F(n)$ to be the following square-free integer: 
$$
 F(n) = \prod_{\substack{\textrm{prime $p\mid \varphi_k(n)$ for some $k\ge 1$} \\ p\nmid n}} p.
$$
From~\cite[Theorem~3]{LP2}, there
is a set ${\mathcal T}$ of positive integers  having asymptotic density~1, such that
for $t\to\infty$, $t\in \cT$, we have
$$
  \varphi(t F(t))  \ge t^{(1+o(1))\log\log t/\log\log\log t}.
$$ 
Put $m=tF(t)$, where we remark that $\gcd(t,F(t))=1$. Then  
$$
m \ge t^{(1+o(1))\log\log t/\log\log\log t},
$$
so 
\begin{equation}\label{eq:log0}
\log m\ge (1+o(1))\frac{\log t \log\log t}{\log\log\log t}. 
\end{equation} 
Note that the function $g(x)=x \log\log x / \log x$ is increasing for large $x$. Applying $g$ 
to both sides of~\eqref{eq:log0}, we derive
\begin{equation}\label{eq:log}
\log t\le (1+o(1))\frac{\log m \log\log\log m}{\log\log m}. 
\end{equation} 

So, by~\eqref{eq:log}, we have  
\begin{equation}
\label{eq:t upper}
t\le m^{(1+o(1)) \log\log\log m / \log\log m},
\end{equation}
as $m\to\infty$ through such numbers. 
Besides, noting that $\gcd(t,F(t))=1$ and that $F(t)$ is square-free, 
for each prime factor $p$ of $F(t)$, by the definition of $F(t)$, we obtain 
\begin{align*}
\gcd\left(p,\frac{\delta\lambda(m)\varphi(\rad(m))}{\varphi(m)}\right)
& =\gcd\left(p,\frac{\delta\lambda(m)\varphi(\rad(t))}{\varphi(t)}\right)\\
& = \gcd(p,\delta\lambda(m)) = p. 
\end{align*}
So, we have $F(t) \mid f(m)$, 
which together with~\eqref{eq:t upper} yields 
$$
f(m)\ge F(t) = \frac{m}{t} \ge  m^{1-(1+o(1))\log\log\log m/\log\log m}
$$ 
as $m\to\infty$.
\end{proof}

\section{Proof of Theorem~\ref{thm:fm=dm}} 

Although the result in~\cite[Lemma~2]{LP1} is enough for the proof of Theorem~\ref{thm:fm=dm}, 
we want to take this opportunity to generalize it, which is of independent interest and might have further applications. 

It is shown in~\cite[Lemma~2]{LP1} that there exists a positive constant $c_0$ such that on a set of asymptotic density $1$ of positive integers $m$, $\varphi(m)$  is a multiple of all prime powers $p^a \le c_0 \log\log m/\log\log\log m$. 
The proof of~\cite[Lemma~2.1]{LP3} enhances this result as follows
(note that the particular residue class of prime factors  plays no role in 
this proof):

\begin{lemma}
\label{lem:q1q2}
For  sufficiently large   $x>0$, all but $O(x/\log \log \log x)$ positive integers $m \le x$ have the property that  for any prime power
$p^a \le \log\log x/\log\log\log x$,  
$m$ has  at least two distinct prime factors    congruent to $1$ modulo  ${p^a}$.
\end{lemma}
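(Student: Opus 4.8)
The plan is to bound the exceptional set by a union over the relevant prime powers and to control each term by a sieve estimate that is uniform in the modulus. Set $y=\log\log x/\log\log\log x$, and for a prime power $q$ and an integer $m$ let $\omega_q(m)$ denote the number of distinct primes $\ell\mid m$ with $\ell\equiv 1\pmod q$; put $E_j(q)=\{m\le x:\omega_q(m)=j\}$. An integer $m\le x$ fails the conclusion only if $m\in E_0(q)\cup E_1(q)$ for some prime power $q\le y$, so it suffices to show
\[
\sum_{q\le y}\bigl(\#E_0(q)+\#E_1(q)\bigr)\ll \frac{x}{\log\log\log x},
\]
the sum being over prime powers. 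The driving observation is that every prime power $q\le y$ has $\varphi(q)\le q\le y$, so by the uniform form of Mertens' theorem in arithmetic progressions,
\[
S_q:=\sum_{\substack{\ell\le x\\ \ell\equiv 1\ (q)}}\frac1\ell=\frac{\log\log x}{\varphi(q)}+O(1)\ \ge\ \log\log\log x+O(1),
\]
uniformly in such $q$, with absolute implied constants.

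Next I would establish, uniformly in the prime power $q$, the bounds $\#E_0(q)\ll x\,e^{-S_q}$ and $\#E_1(q)\ll x\,S_q\,e^{-S_q}+x^{3/4+o(1)}$. The first is a standard upper-bound sieve estimate for integers free of prime factors in a single residue class (sifting dimension at most $1$, hence absolute constants). For the second, write each $m\in E_1(q)$ as $m=\ell^{b}n$ with $\ell\equiv 1\pmod q$ prime, $b\ge1$, $\gcd(n,\ell)=1$ and $\omega_q(n)=0$; summing the bound $\#\{n\le t:\omega_q(n)=0\}\ll t\,e^{-S_q}$ (valid for $t\ge\sqrt x$) over $\ell^{b}\le\sqrt x$ contributes $\ll x\,e^{-S_q}\sum_{\ell\le\sqrt x,\ \ell\equiv1\ (q)}\ell^{-1}\ll x\,S_q\,e^{-S_q}$, while the range $\ell^{b}>\sqrt x$ forces the cofactor below $\sqrt x$ and is handled by the Brun--Titchmarsh inequality for the count of such $\ell$ together with $\sum_{n<\sqrt x,\ \omega_q(n)=0}n^{-1}\ll(\log x)^{1-1/\varphi(q)}$, the squarefull part being $\ll x^{3/4}$.

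Finally I would sum over $q$. Since $s\mapsto se^{-s}$ is decreasing for $s\ge1$ and $S_q\ge\log\log\log x+O(1)\ge1$ once $x$ is large, both $\#E_0(q)$ and $\#E_1(q)$ are $\ll x\log\log\log x/\log\log x$, uniformly in $q\le y$. On the other hand, the number of prime powers not exceeding $y$ is $\pi(y)+O(\sqrt y)\ll y/\log y\ll\log\log x/(\log\log\log x)^{2}$ by Chebyshev's estimate; the saving factor $\log y\asymp\log\log\log x$ here is exactly what makes the bound work. Multiplying the two estimates (and noting that the $x^{3/4+o(1)}$ remainders sum to $o(x/\log\log\log x)$) gives $\sum_{q\le y}(\#E_0(q)+\#E_1(q))\ll x/\log\log\log x$, as required.

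The main obstacle is the uniformity in $q$ of the bound for $\#E_1(q)$, in particular near the diagonal $\ell^{b}\approx x$, where the sieve estimate for the cofactor degenerates and one must fall back on Brun--Titchmarsh for the primes $\ell\equiv1\pmod q$; this is legitimate only because $q\le\log\log x$ is so small that all the relevant estimates for that progression carry absolute implied constants. Everything else is routine. Alternatively, one may simply run the argument of~\cite[Lemma~2.1]{LP3} verbatim, as it uses nothing about the primes $\equiv1\pmod q$ beyond these uniform density properties.
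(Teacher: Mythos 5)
Your proof is correct and takes essentially the same route as the paper, whose proof consists of invoking the argument of~\cite[Lemma~2.1]{LP3}: for each prime power $q\le y$ one bounds the integers $m\le x$ with at most one prime factor $\equiv 1\pmod q$ by the standard sieve estimate $\ll x(1+S_q)e^{-S_q}$ (plus a negligible squarefull contribution) and then sums over the $\ll y/\log y$ prime powers, exactly as you do. The uniformity in $q$ that you single out is unproblematic since $q\le\log\log x$ is tiny compared with $x$, so nothing further is needed.
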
 
Now we are ready to prove Theorem~\ref{thm:fm=dm}. 
This proof follows  some of the arguments in~\cite{LP1}.
\begin{proof}[Proof of Theorem~\ref{thm:fm=dm}]
Let $\cE_1(x)$ be the set of positive integers
$m\le x$ which fail the condition of Lemma~\ref{lem:q1q2}. 
Then we have 
 \begin{equation}
\label{eq:E1}
\# \cE_1(x) = o(x)
\end{equation}
as $x\to\infty$. First we show that for almost all $m$, $d(m)$ does not  have large prime factors. To do this,  for a sufficiently large real number $x$
we put
$$
y=\log\log x,
$$
and let
$$
\cE_2(x)=\{m\le x: p\mid d(m)~{\text{\rm for some odd prime}}~p> y\log y\}.
$$
If $m\in \cE_2(x)$, then $p\mid m$ and $p\mid \varphi(\rad(m))$. So, there is a
 prime factor $q$ of $m$ such that $q\equiv 1\pmod p$.  Hence, $m=pq n$ for some positive integer $n$. The number of such $m\le x$ is
$\lfloor x/pq\rfloor\le x/pq$. Summing this up over all primes $q\le x$ congruent to $1$ modulo $p$ and then over all primes $p>y\log y$ and using the Brun-Titchmarsh theorem  (see~\cite[Theorem~6.6]{IwKow}) coupled with partial summation, we obtain 
 \begin{equation}
 \begin{split}
\label{eq:E2}
\# \cE_2(x) &\le  \sum_{p>y\log y}  \sum_{\substack{q\le x\\ q\equiv 1\pmod p}} \frac{x}{pq}
 =  x\sum_{p> y\log y} \frac{1}{p} \sum_{\substack{q\le x\\ q\equiv 1\pmod p}} \frac{1}{q}\\
& \ll  x\sum_{p \ge y\log y} \frac{\log\log x}{p^2}
\ll x y\sum_{n \ge y\log y} \frac{1}{n^2}
\ll  \frac{x}{\log y} = o(x)
\end{split}
\end{equation}
as $x\to\infty$.  

Let $\cE_3(x)$ be the set of $m\le x$ having a prime divisor in the interval $I=[y/\log y, y\log y]$. Writing $m=pn$, for $p\in I$ and fixing $p$, we get that there are $\lfloor x/p\rfloor\le x/p$ possible choices for $n$. Hence, using the
Mertens formula \eqref{eq:Mert},  
 we obtain 
 \begin{equation}
 \begin{split}
\label{eq:E3}
\#\cE_3(x) & \le  x\sum_{ y/\log y\le p\le y\log y} \frac{1}{p}\\
& =  x\left(\log\log(y\log y)-\log\log( y/\log y)\right) + O(x/\log y)\\
& =  x\log\(1 + O(\log \log y/\log y)\)  + O(x/\log y)\\
& \ll   \frac{x\log\log y}{\log y} = o(x)
\end{split}
\end{equation}
as $x\to\infty$. 

Now, let $\cE_4(x)$ be the set of $m\le x$ which are not in $\cE_1(x)\cup \cE_2(x)\cup \cE_3(x)$ such that $\gcd(m,\varphi(m))$ is divisible by some prime power $p^a> y/\log y$. If $p\mid \varphi(\rad(m))$, since $m$ is not in $\cE_2(x)$, it follows that $p\le y\log y$, and since
 $m$ is not in $\cE_3(x)$, it follows that $p< y/\log y$; hence, $a\ge 2$. If $p \nmid \varphi(\rad(m))$, then we must have $p^2 \mid m$, and since $m \not\in \cE_3(x)$, we have $p< y/\log y$ or $p> y\log y$; so we have $p^2 \mid m$, and either $a\ge 2$ or $p>y \log y$.  
 Thus, $m$ has a square-full divisor $d > y/\log y$ 
 or $d>(y\log y)^2$. Fixing $d$, the number of such  $m\le x$ is $\lfloor x/d\rfloor\le x/d$. 
 So, we deduce that 
 \begin{equation}
\label{eq:E4}
\begin{split}
 \#\cE_4(x) &\le x\sum_{\substack{d>y/\log y\\ d~{\text{\rm square-full}}}} \frac{1}{d}
 +x\sum_{\substack{d>(y\log y)^2\\ d~{\text{\rm square-full}}}} \frac{1}{d} \\
 & \ll \frac{x(\log y)^{1/2}}{y^{1/2}} + \frac{x}{y\log y}  = o(x)
 \end{split}
\end{equation}
as $x\to\infty$. 

We see from~\eqref{eq:E1}, \eqref{eq:E2}, \eqref{eq:E3} and~\eqref{eq:E4} that for the exceptional set 
$$
\cE(x) =  \cE_1(x)\cup \cE_2(x)\cup \cE_3(x)\cup \cE_4(x),
$$
we have 
$$
\# \cE(x) = o(x)
$$
as $x\to\infty$. 
 From now on, we assume that $m\in [1, x]\setminus \cE(x)$, and assume that $m$ has the 
 prime factorisation as in~\eqref{eq:1}. Looking at
 $$
 d(m)=\gcd(m, \delta \varphi(\rad(m))), 
 $$
 we claim that 
 $$
d(m)=\prod_{\substack{1\le j \le k\\ p_j< y/\log y}} p_j^{r_j}.
 $$
 Indeed, since  $m\not\in \cE_2(x)\cup \cE_3(x)$, it follows that if $ p_j \ge y/\log y$, 
 then  $p_j\nmid d(m)$. Further, if $p_j<y/\log y$, then    since $m \not\in \cE_1(x)$, by  Lemma~\ref{lem:q1q2}, we have $p_j \mid \varphi(\rad(m))$, and so from $p_j^{r_j} \mid \gcd(m,\varphi(m))$, we deduce that
 $p_j^{r_j}\le  y/\log y$ because $m\not\in \cE_4(x)$,  and thus $p_j^{r_j}$ divides $\varphi(\rad(m))$ because $m\not\in \cE_1(x)$. This yields the claim. 
 
Finally, we look at 
$$
f(m) = \gcd(m, \delta\lambda(m)\varphi(\rad(m))/\varphi(m)). 
$$
Let $\cE_5(x)$ be the set of positive integers $m\le x$ 
for which there exist a prime $p$ and an integer $r\ge 2$ such that $p^r\mid m$ and $p^r > (y/\log y)^{1/2}$. Thus, each $m\in \cE_5(x)$ has a square-full divisor $d > (y/\log y)^{1/2}$, so as in~\eqref{eq:E4} we deduce that
$$
\# \cE_5(x) = o(x)
$$
as $x\to\infty$. 

Let $m\in [1,x]\setminus (\cE(x) \cup \cE_5(x))$. We still assume that $m$  has the prime factorisation ~\eqref{eq:1}. 
For a prime factor $p_j < y/\log y$, from the above discussion, we have $p_j^{r_j}\le  y/\log y$, 
and then $p_j^{r_j} \mid \lambda(m)$ because $m \not\in \cE_1(x)$, and so $p_j \mid f(m)$. If $r_j\ge 2$, then  $p_j^{r_j}\le ( y/\log y)^{1/2}$ because $m\not\in \cE_5(x)$, and thus $p_j^{2r_j}\le  y/\log y$, which, together with $m\not\in \cE_1(x)$ and Lemma~\ref{lem:q1q2}, implies that there exists a prime factor $q$ of $m$ such that $p_j^{2r_j} \mid q-1$. Thus, $p_j^{r_j} \mid f(m)$. Hence, we have 
 $$
f(m)=\prod_{\substack{1\le j \le k\\ p_j< y/\log y}} p_j^{r_j} = d(m).
 $$
 This completes the proof. 
\end{proof}

\section{Proof of Theorem~\ref{thm: pair df}}  
\label{sec:pair df}

(i) 
Choose an odd prime $p \equiv 1\pmod {n^2}$.  By Linnik's Theorem,  the smallest such prime satisfies 
$$
p\ll n^{2L}.
 $$
 By construction, we have $p>n^2$.

Now, take
$$
m=np.
$$
Clearly, we have 
$$
d(m)=\gcd(m,\delta\varphi(\rad(m)))=\gcd(np, \delta\varphi(\rad(n))(p-1))=n, 
$$
and then noticing $n^2 \mid \lambda(m)$, we have 
\begin{align*}
f(m) & =\gcd(m,\delta\lambda(m)\varphi(\rad(m))/\varphi(m)) \\
& =\gcd(np, \delta\lambda(m)\varphi(\rad(n))/\varphi(n))=n.
\end{align*}

So, we can construct an integer $m \ll n^{2L+1}$ such that  
$$
d(m)=f(m)=n. 
$$

(ii)
We first write $\lambda(n)=\lambda_1\lambda_2$ such that 
$$
\gcd(\lambda_1,n)=1 \mand \rad(\lambda_2) \mid n.
$$
We then choose an odd prime $q$ satisfying 
$$
q \equiv 1 +  8n^3\lambda_2^2/\rad(n) \pmod{8n^4\lambda_2^2/\rad(n)}.
$$ 
By Linnik's Theorem,  the smallest such prime satisfies
$$
q\ll n^{6L}.
 $$
By construction, we can write 
$$
q-1=8kn^3\lambda_2^2/\rad(n)
$$
 with 
$$
\gcd(q,8n^3\lambda_2^2/\rad(n))=1 \mand \gcd(k,n)=1. 
$$

Take
$$
m=4n^2\lambda_2q.
$$
Clearly, we get 
\begin{align*}
d(m)& =\gcd(m,\delta\varphi(\rad(m))) \\
&=\gcd(4n^2\lambda_2q, 2\varphi(\rad(n))(q-1))=4n^2\lambda_2.  
\end{align*}
In addition, note that 
$$
\lambda(m) = \lcm[\lambda(4n^2\lambda_2), q-1] = c(q-1) 
$$
for some integer $c$ dividing $\lambda_1$, and so 
$$
\gcd(c,n)=1.
$$
Then,  
\begin{align*}
\frac{\lambda(m)\varphi(\rad(m))}{\varphi(m)} & =  \frac{c(q-1)\varphi(\rad(n))}{\varphi(4n^2\lambda_2)} \\
& =\left\{ \begin{array}{ll}
      2ckn\lambda_2 & \textrm{if $n$ is even,}\\
      4ckn\lambda_2 & \textrm{if $n$ is odd,}
                 \end{array} \right.
\end{align*}
where we use the identity 
$$
\varphi(4n^2\lambda_2)=\left\{ \begin{array}{ll}
      4n\lambda_2\varphi(n) & \textrm{if $n$ is even,}\\
      2n\lambda_2\varphi(n) & \textrm{if $n$ is odd.}
                 \end{array} \right.
$$
Thus, if $n$ is even, we obtain  
\begin{align*}
f(m)&=\gcd(m,\delta\lambda(m)\varphi(\rad(m))/\varphi(m)) \\ 
& =\gcd(4n^2\lambda_2q,4ckn\lambda_2)=4n\lambda_2;
\end{align*} 
while if $n$ is odd, we get 
$$
f(m) = \gcd(4n^2\lambda_2q,8ckn\lambda_2)=4n\lambda_2.
$$
Hence, we always have $f(m)=4n\lambda_2$, and so 
$$
d(m)/f(m) =n. 
$$ 
We conclude the proof by noticing that we can make $m\ll n^{6L+3}$.

(iii) 
Denote $c=b/a$. If $c$ is even, then since $\gcd(c,\varphi(\rad(b)))=1$, we must have $b=2^r$ for some integer $r\ge 1$. Then, from the assumption $\gcd(c,a)=1$, we see that $a=1$. For $(a,b)=(1,2)$, 
by choosing $m=2^s,s\ge 3$, we get $(f(m),d(m))=(1,2)$. 
If $r\ge 2$, by choosing a prime $p$ such that $2^{r-1}\mid p-1$ and $2^r \nmid p-1$ (for example, $p \equiv 1 + 2^{r-1} \pmod{2^r}$) and putting $m=2^{r+1}p$, we obtain $(f(m),d(m))=(1,2^r)=(a,b)$.

In the following, we assume that $c$ is odd. 
We choose an odd prime $q$ such that
$$
q \equiv 1 +  a^2c \pmod{a^2c^2}.
$$ 
Write $q-1=a^2cj$. By construction, we have $\gcd(c,j)=1$.

Now, let $m=ac^2q$.
Since $\gcd(c,j)=1$ and $\gcd(c,a\varphi(\rad(b)))=1$, it is easy to see that
$$
d(m) = \gcd(ac^2q, \delta\varphi(\rad(b))a^2cj) = ac =b, 
$$
and 
$$
f(m)=\gcd\left(ac^2q,\frac{\delta\lcm[\lambda(a),\lambda(c^2),a^2cj]\varphi(\rad(b))}{\varphi(ac^2)}\right)=a.
$$
 As in the above, by Linnik's Theorem, we can choose
$$
m \ll b^{2L+2}/a.
$$

\section{Proof of Theorem~\ref{thm:except}}
\label{sec:except}

(i)
By contradiction, assume that there exists an integer $m \ge 1$ such that $f(m)=n$ and $d(m)=2n$.
That is, we have
\begin{equation}\label{eq:cond1}
\gcd(m,\delta\varphi(\rad(m))) = 2n, 
\end{equation}
and
\begin{equation}\label{eq:cond2}
 \gcd(m,\delta\lambda(m)\varphi(\rad(m))/\varphi(m)) = n.
\end{equation}
Note that $n>1$ and $n$ is odd. 

Write $m=2nm_1$. Note that we must have $m_1 >1$. If $m_1$ is even, then $\delta=2$, and so $4 \mid \gcd(m,\delta\varphi(\rad(m)))$. Thus,  $4 \mid 2n$ by~\eqref{eq:cond1}, which contradicts  the fact that $n$ is odd.
So, $m_1$ must be odd.

Then it is easy to see that the integer $\lambda(m)\varphi(\rad(m))/\varphi(m)$ is even. 
So, $2 \mid n$ by~\eqref{eq:cond2}. This contradicts the fact that $n$ is odd. Hence, such an integer $m$ does not exist. 

Similarly, by contradiction, we can also show that there is no positive integer $m$ such that $(f(m),d(m))=(n,4n)$.

(ii) 
We choose an odd prime $\ell$ such that
$$
\ell \equiv 1 +  q \pmod{pq}.
$$ 
Write $\ell-1=qa$. By construction, we have $\gcd(p,a)=1$.

Now, let $m=p^2q\ell$.
Since $\gcd(p,a)=1, p\mid q-1$ and $p^2 \nmid q-1$, it is easy to see that
$$
d(m) = \gcd(p^2q\ell, (p-1)(q-1)qa) = pq,
$$
and
$$
f(m)=\gcd(p^2q\ell,\frac{\lcm[p(p-1),q-1,qa]}{p})=q.
$$
 As before, by Linnik's Theorem, we can choose
$$
m \ll p(pq)^{L+1}.
$$

\section{Comments}
\label{sec:comm}

We see from the proof of Theorem~\ref{thm: pair df} that its bounds 
depend on the smallest prime in 
some specific arithmetic progressions and thus in many cases, the value of $L$  can be 
chosen to be smaller than that implied by the general results of  Xylouris~\cite{Xyl1,Xyl2}.

For example, in Theorem~\ref{thm: pair df}~(i) the result depends on the smallest prime 
$p \equiv 1 \pmod n$. We have already mentioned that~\cite{BFI} 
allows the value of $L=2$ for almost all $n$. 
One however can do better with a result of Mikawa~\cite{Mik} that allows to take 
 $L=32/17$ for almost all $n$ in the statement of  Theorem~\ref{thm: pair df}~(i). 
 
 Furrthermore,  in Theorem~\ref{thm: pair df}~(ii) the result depends on the smallest prime 
$q \equiv 1 \pmod {n^2}$.  The result of Baker~\cite{Bak2} (see also~\cite{Bak1}),
which gives a version of the Bombieri--Vinogradov theorem for square moduli, implies 
that for  almost all $n$ the statement of  Theorem~\ref{thm: pair df}~(ii) 
 holds with any fixed $L > 2$. 
 
 We also recall that there are concrete families of moduli which admit a better value of $L$.
 For example, by  the result of Chang~\cite[Corollary~11]{Chang} 
 one can take any $L>12/5$ for moduli without large prime divisors. For example, 
 this is true for all
 powers of a fixed prime number. 
  
 It is also likely that one can  use the above results to improve Theorems~\ref{thm: pair df}~(ii)
 andl~\ref{thm:except}~(ii) for almost all values of the parameters involved.

We remark that the additivity  of the Carmichael quotients  
implies that for any integer $k$ the exponential function $\exp(2 \pi i k C_m(a)/m)$ is a multiplicative 
character of the group  $(\Z/m^2\Z)^*$. For a prime $m=p$,  this has been observed and used
by Heath-Brown~\cite[Theorem~2]{H-B2} (see also~\cite{Shp1}) in the classical case 
of Fermat quotients $(a^{p-1}-1)/p$ modulo a prime $p$.  The same approach 
also works for the Carmichael quotients, and combined with the Burgess 
bound (see~\cite[Theorem~12.6]{IwKow})
allows to study the distribution of  values $C_m(a)$, $1 \le a \le A$, modulo $m$.  One can also study their algebraic  and additive 
properties (see~\cite{ChWi} and~\cite{HarmShp}, respectively, for the case of Fermat quotients). 
Furthermore, using that the set~\eqref{eq:Zero Set} is a subgroup  of $(\Z/m^2\Z)^*$ one can 
obtain analogues of several other results about the distribution of its elements, 
in particular about the smallest element which does not belong to 
this set (see~\cite{BFKS, OstShp, SSV, Shp2,Shp3} and references therein).

\section*{Acknowledgements}
The authors are very grateful to the referee for valuable comments, suggestions and especially for pointing out the error 
in~\cite[Proposition~4.3]{Sha} and for suggesting establishing the results in Theorem \ref{thm: pair df} (i) and (ii). 

This paper started during a visit of F.~L. to the School of Mathematics and Statistics of the University of New South Wales in January 2016.
This author thanks that Institution for hospitality and support. 
Part of this work was also done when F. L. was visiting the Max Planck Institute for Mathematics in Bonn, Germany in 2017. He thanks this institution for hospitality. In addition, he was also supported by grants CPRR160325161141 and an A-rated researcher award both from the NRF of South Africa and by grant no. 17-02804S of the Czech Granting Agency. 
The research of the second and third authors was supported by the Australian
Research Council Grant DP130100237.

\end{document}